\numberwithin{equation}{section}
\theoremstyle{plain}
\newtheorem{thm}{Theorem}[section]
\newtheorem{theorem}[thm]{Theorem}
\newtheorem{lemma}[thm]{Lemma}
\newtheorem{corollary}[thm]{Corollary}
\newtheorem{proposition}[thm]{Proposition}
\theoremstyle{definition}
\newtheorem{remark}[thm]{Remark}
\newtheorem{definition}[thm]{Definition}
\newtheorem{example}[thm]{Example}
\newtheorem{defn-thm}[thm]{Definition-Theorem}
\newcommand{\G}{{\mathbb G}}
\renewcommand{\P}{{\mathbb P}}
\newcommand{\Z}{{\mathbb Z}}
\newcommand{\btheorem}{\begin{theorem}}
\newcommand{\etheorem}{\end{theorem}}
\newcommand{\bproposition}{\begin{proposition}}
\newcommand{\eproposition}{\end{proposition}}
\newcommand{\bdefinition}{\begin{definition}}
\newcommand{\edefinition}{\end{definition}}
\newcommand{\bcorollary}{\begin{corollary}}
\newcommand{\ecorollary}{\end{corollary}}
\newcommand{\bproof}{\begin{proof}}
\newcommand{\eproof}{\end{proof}}
\newcommand{\bremark}{\begin{remark}}
\newcommand{\eremark}{\end{remark}}
\newcommand{\eexample}{\end{example}}
\newcommand{\bexample}{\begin{example}}
\newcommand{\elemma}{\end{lemma}}
\newcommand{\blemma}{\begin{lemma}}
\renewcommand{\bar}{\overline}
\renewcommand{\phi}{\varphi}
\newcommand{\ee}{\end{eqnarray*}}
\newcommand{\be}{\begin{eqnarray*}}
\newcommand{\beq}{\begin{equation}}
\newcommand{\eeq}{\end{equation}}
\newcommand{\bd}{\begin{enumerate}}
\newcommand{\ed}{\end{enumerate}}
\renewcommand{\tilde}{\widetilde}
\renewcommand{\>}{\rightarrow}
\begin{document}
\title{On certain K-equivalent birational maps}
\makeatletter
\let\uppercasenonmath\@gobble
\let\MakeUppercase\relax
\let\scshape\relax
\makeatother
\author{ Duo Li}

\address{{Address of Duo Li: Yau Mathematical Sciences Center, Tsinghua University, Beijing, 100084, P. R. China.}}
\email{\href{mailto:liduo211@mails.ucas.ac.cn}{{liduo211@mails.ucas.ac.cn}}}

\maketitle

\begin{abstract}We study K-equivalent birational maps which are resolved by a single blowup.  Examples of such maps include standard flops and twisted Mukai flops. We give a criterion for such maps to be a standard flop or a twisted Mukai flop. As an application, we classify all such birational maps up to dimension 5.

\end{abstract}

\setcounter{tocdepth}{1} \tableofcontents

\section{Introduction} \noindent\\

In this article, all the varieties are defined over complex numbers and are assumed to be projective and smooth. By projective bundle, we mean that there  exists a locally free sheaf  $\mathcal E$ s.t. $\P(\mathcal E)=Proj ( Sym (\mathcal E^{\vee})).$ By $\P^n$ fibration $\alpha:X\>Y,$ we mean that $\alpha^{-1}(y)$ is isomorphic to $\P^n$ for every closed point $y\in Y.$  \\

In this article, we address the study of the birational map $\xymatrix{\theta: X\ar@{-->}[r]&X^+}$  which is K-equivalent and can be resolved by a single blowup.  We call $\theta$ a birational map of \textbf{Simple Type}.  To be concrete, there is a closed smooth subvariety $P$ of $X$ (resp. $P^+$  a closed smooth subvariety of $X^+$) and the blowup  $\varphi: Bl_P(X)\> X$ along $P$ (resp. $\varphi^+: Bl_P^+(X^+)\> X^+$ along $P^+$ ) satisfies that:

\bd
\item $Bl_P(X)\simeq Bl_{P^+}(X^+)$ (we denote this variety by $\tilde X$).
 \item $\theta \circ \varphi=\varphi^+,$ i.e., the diagram $$\xymatrix{&\tilde X\ar[dl]_{\varphi}\ar[dr]^{\varphi^+}&\\X\ar@{-->}[rr]^{\theta}&&X^+}$$ is commutative.
     \item $\varphi^*(K_X)=\varphi^{+*}(K_{X^+})$ in $Pic(\tilde X).$
     \item $\theta$ is not an isomorphism.\ed

 Actually, we know two concrete constructions of birational maps of simple type as follows:\\

$1$. standard flop (over a base $C$)(In article \cite{W.C.L}, standard flops are called ordinary flops instead): \\

Suppose that there is a closed subvariety $P$ of $X$ satisfies that  $\pi:P=\P_C(\mathcal E)\>C$ is a projective bundle. The normal bundle of $P$ satisfies that $N_{P/X}=\pi^*\mathcal E^+\otimes \mathcal O(-1)$ where $\mathcal E^+$ is a vector bundle over $C$ and $\mathcal O(1)$ is the tautological bundle for $\P_C(\mathcal E)$. Then we consider the blowup of $X$ along $P.$ Let $P^+=\P_C(\mathcal E^+).$ We denote $\pi^+: P^+\>C$ the projective bundle morphism,  then the  exceptional divisor $E\simeq P\times_C P^+.$ For any fibre $F$ of $p^+: E\>P^+,$ by a simple calculation, we have that $\mathcal O_E(E)|_F\simeq \mathcal O(-1)$ (see Chapter 11, \cite{DH} or Section 1, \cite{W.C.L}). By Fujiki-Nakano's criterion(see \cite{F} or Remark 11.10, \cite{DH}), there is a blow-down $\varphi^+: \tilde{X}\>X^+$ compatible with $p^+.$ Hence we obtain a birational map $\theta:X \dashrightarrow X^+,$ we call $\theta$ a standard flop over $C.$\\

$2$. twisted Mukai flop (over a base $C$):\\

Suppose that there is a closed subvariety $P$ of $X$ satisfies that  $\pi:P=\P_C(\mathcal E)\>C$ is a projective bundle. The normal bundle of $P$ satisfies that $N_{P/X}=\Omega_{P/C}\otimes \pi^*\mathcal L$ for some line bundle $\mathcal L $ of $C.$  Then we consider the blowup of $X$ along $P.$ Let $P^+=\P_C(\mathcal E^{\vee}).$ We denote $\pi^+: P^+\>C$ the projective bundle morphism,  then the  exceptional divisor $E\simeq \P_P(\Omega_{P/C})$ is a prime divisor of $P\times_C P^+,$ hence there is an induced projective bundle structure $p^+: E\>P^+.$ For any fibre $F$ of $p^+,$ by a simple calculation, we have that $\mathcal O_E(E)|_F\simeq \mathcal O(-1)$ (see Chapter 11, \cite{DH} or Section 6 of \cite{W.C.L}). By Fujiki-Nakano's criterion(see \cite{F} or Remark 11.10, \cite{DH}), there is a blow-down $\varphi^+: \tilde{X}\dashrightarrow X^+$ compatible with $p^+.$ Hence we obtain a birational map $\theta:X \dashrightarrow X^+,$ we call $\theta$ a twisted Mukai flop over $C,$ if $\mathcal L$ is a trivial line bundle, we call $\theta$ a Mukai flop for short.\\
\\

\bremark\label{W.C} In the above constructions, $X^+$ is not necessarily projective. But it is reasonable to assume that there exists a flopping contraction $\beta:X\>\bar{X}$ compatible with $\pi:P\>C.$  Under this assumption, $X^+$ can be proved projective. For details, see Proposition 1.3 and Proposition 6.1, \cite{W.C.L}.\eremark There is a natural question: are all birational maps of simple type either  standard flops or  twisted Mukai flops? Actually, we construct a new simple type birational map in Example \ref{Example} and one of the  main results  of our article is:
\btheorem A birational map of simple type, if $\dim X\le 5,$
is either a  standard flop, a twisted Mukai flop or a flop as in Example \ref{Example}.
\etheorem
For properties about  standard flops and twisted Mukai flops, we use \cite{DH} and \cite{W.C.L} as our main references. We note that in articles \cite{BW} and \cite{W.C.L}, motivic and quantum invariance under  a standard flop or a twisted Mukai flop was studied, we hope this article could offer useful information to further study in that direction.
\\

Now let us state the structure of this article:  first, we  prove that $\varphi$ and $\varphi^+$ share a common exceptional divisor $E$ and $\dim P= \dim P^+.$  Then we generalise  E. Sato' classification results about varieties which admits two different projective bundle structures  to a relative version. The main result of this article is Theorem \ref{main result}, we prove that a birational map of simple type is a standard flop or a twisted Mukai flop if and only if $P$ and $P^+$ are projective bundles over a common variety. As an application of our main result, we classify simple type birational maps when $\dim X\le 5,$ here  recent results of \cite{K} and \cite{W} play important roles in our classification.\\

\textbf{Acknowledgments.} The author is  very grateful to Professor Baohua
Fu for his support, encouragement and stimulating discussions over
the last few years. The author is very grateful to Professor Chin-Lung Wang for his helpful suggestions and discussions. The author wishes to thank Yang Cao, Yi Gu, Wenhao Ou,  Xuanyu Pan, Lei Zhang  for useful discussions and thank Professor Xiaokui Yang for his support and encouragement.
 \section {Proof of main results}Let us start with the following observation, which shows that $\varphi$ and $\varphi^+$ share a common exceptional divisor $E$, hence $E$ admits two projective bundle structures. We will repeatedly use this fact in the rest of our article.
 \begin{lemma}The blowups $\varphi$ and $\varphi^+$ share a common exceptional divisor $E$ and $\dim P=\dim P^+$\end{lemma}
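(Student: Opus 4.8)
The plan is to deduce the equality of the two exceptional divisors from the K-equivalence hypothesis (3) via the canonical bundle formula for a blowup, and then to read off $\dim P=\dim P^+$ from the resulting relation in $Pic(\tilde X)$. Throughout, write $c=\mathrm{codim}(P,X)$ and $c^+=\mathrm{codim}(P^+,X^+)$, and let $E$ (resp. $E^+$) be the exceptional divisor of $\varphi$ (resp. $\varphi^+$), regarded as a prime divisor on $\tilde X$; recall that a line $\ell\cong\P^1$ contained in a fibre of $E\to P$ satisfies $E\cdot\ell=\deg(\mathcal O_E(E)|_\ell)=-1$, and likewise for $E^+$.

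First I would rule out the degenerate cases, showing $c\ge 2$ and $c^+\ge 2$. If $c=1$, then $P$ is a smooth, hence Cartier, divisor, so $\varphi$ is an isomorphism, $\tilde X=X$, and $\theta=\varphi^+$ is a morphism. Hypothesis (3) then reads $K_X=\varphi^{+*}K_{X^+}$, while the blowup formula gives $K_X=\varphi^{+*}K_{X^+}+(c^+-1)E^+$; hence $(c^+-1)E^+=0$ in $Pic(X)$. Since $E^+\cdot\ell=-1$ for a line $\ell$ in a fibre of $E^+\to P^+$, the class of $E^+$ is not torsion, so $c^+=1$; but then $\theta$ is an isomorphism, contradicting (4). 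By symmetry, $c^+\ge 2$ as well.

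Next comes the main comparison. Applying the blowup formula on both sides, $K_{\tilde X}=\varphi^*K_X+(c-1)E=\varphi^{+*}K_{X^+}+(c^+-1)E^+$, so (3) yields
\[
(c-1)E=(c^+-1)E^+\quad\text{in}\quad Pic(\tilde X).
\]
Intersecting with a line $\ell$ in a fibre of $E\to P$ gives $(c^+-1)(E^+\cdot\ell)=-(c-1)<0$, hence $E^+\cdot\ell<0$, and since $E^+$ is effective this forces $\ell\subseteq E^+$. Such lines sweep out $E$ (as $c\ge 2$), so $E\subseteq E^+$ set-theoretically, and the symmetric argument gives $E^+\subseteq E$; both being prime divisors, $E=E^+$. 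Substituting into the displayed identity, $(c-c^+)E=0$ in $Pic(\tilde X)$, and since $E$ is not torsion, $c=c^+$. Therefore $\dim P=\dim E-(c-1)=\dim E^+-(c^+-1)=\dim P^+$.

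The only delicate point is the preliminary reduction to $c,c^+\ge 2$, i.e. excluding the possibility that one of the two blowups is an isomorphism; after that, everything is the short intersection-theoretic computation above, and I expect no further obstacle.
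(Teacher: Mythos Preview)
Your proof is correct and follows essentially the same route as the paper: both derive $(c-1)E\equiv(c^+-1)E^+$ from the K-equivalence and the blowup formula, then intersect with lines in the fibres of $E\to P$ to force $E=E^+$ and $c=c^+$. The only difference is cosmetic: the paper argues by contradiction (if $E\ne E^+$ then some fibre line avoids $E^+$, giving a sign contradiction), whereas you argue directly that every such line lies in $E^+$, hence $E\subseteq E^+$, and symmetrically; and you spell out the reduction to $c,c^+\ge2$ from hypothesis~(4), which the paper simply asserts.
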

 \begin{proof}Let $k=codim(P,X)$ and  $k^+=codim(P^+,X^+),$ then $k,$ $k^+\ge2.$ Since $\varphi$ and $\varphi^+$ are blowups, the corresponding exceptional divisors are $E$ and $E^+,$  we have the following two equations: \bd\item $\varphi^*K_X=K_{\tilde X}-(k-1)E$\item $\varphi^{+*}K_{X^+}=K_{\tilde X}-(k^+-1)E^+.$\ed Since $\varphi^*(K_X)=\varphi^{+*}(K_{X^+}),$ $E$ is numerically equivalent to $\frac {k^+-1}{k-1}E^+,$ where $\frac {k^+-1}{k-1}>0.$\\

Since $E$ is a projective bundle over $P$, for any point $p\in P$, the fiber $F_p$ of $\varphi$ is isomorphic to $\mathbb P^{k-1}$. For any line $C$ in $F_p\simeq \mathbb P^{k-1}$, $C\cdot E=\deg\mathcal  O_{\tilde X}(E)|_C=\deg\mathcal  O(-1)=-1.$ As $p$ varies, these negative lines in  $F_p$ cover $E.$\\

  If $E\ne E^+, $ then there exists a line $C$ in  $F_{p_0}$ for some $p_0\in P $ such that $C$ is not contained in $E^+.$ So $C\cdot E=C\cdot (\frac {k^+-1}{k-1})E^+\ge 0, $ contradicts to $C\cdot E<0.$\\

  Then $E=E^+$ and $k^+=k.$ \end{proof}
   By  Propostion 1.14, \cite{De}, $\varphi|_E$ and $\varphi^+|_E$ are non-isomorphic fibrations. There is a lower bound for the dimension of $P.$
  \begin{lemma}\label{lower bound}$2\dim P+1\ge \dim X.$\end{lemma}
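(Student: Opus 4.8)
The plan is to recast the inequality as a statement about the two $\mathbb{P}^{k-1}$-bundle structures carried by the common exceptional divisor $E$ found in the foregoing lemma, and then to contradict the fact recorded immediately before the statement, that $\varphi|_E$ and $\varphi^+|_E$ are non-isomorphic fibrations. Write $k=\mathrm{codim}(P,X)=\mathrm{codim}(P^+,X^+)$ (equal by the foregoing lemma), and set $p:=\varphi|_E\colon E\to P$ and $p^+:=\varphi^+|_E\colon E\to P^+$. Since the exceptional divisor of the blow-up of a smooth centre is its projectivised normal bundle, $p$ and $p^+$ are $\mathbb{P}^{k-1}$-bundles; from $\dim E=\dim X-1$ we get $\dim P=\dim P^+=\dim X-k$, so $2\dim P+1\ge\dim X$ is equivalent to $\dim P\ge k-1$.

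Assume for contradiction that $\dim P\le k-2$. Fix any fibre $f\cong\mathbb{P}^{k-1}$ of $p$. Then $p^+(f)\subseteq P^+$ has dimension at most $\dim P^+=\dim P\le k-2<\dim f$, so every fibre of the surjection $p^+|_f\colon f\to p^+(f)$ is positive--dimensional; picking a point of $p^+(f)$ produces a fibre $f^+\cong\mathbb{P}^{k-1}$ of $p^+$ with $\dim(f\cap f^+)\ge 1$, hence an irreducible curve $C\subseteq f\cap f^+$.

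Now I would compare numerical classes in $N_1(E)$. All lines in all fibres of $p$ are algebraically, hence numerically, equivalent — the relative variety of lines of the projective bundle $E\to P$ is connected over the connected base $P$ — so they define a single class $[\ell]\in N_1(E)$; similarly the fibres of $p^+$ share a line class $[\ell^+]$. Since $C$ sits in the $\mathbb{P}^{k-1}$'s $f$ and $f^+$, one gets $[C]=a[\ell]=b[\ell^+]$ with $a,b\ge 1$, so $[\ell]$ and $[\ell^+]$ are positively proportional. Fixing $H$ ample on $P$, an irreducible curve $C'$ of $E$ is $p$-contracted iff $C'\cdot p^*H=0$ iff $[C']\in\mathbb{R}_{\ge 0}[\ell]$, and likewise for $p^+$ and $[\ell^+]$; hence $p$ and $p^+$ contract exactly the same curves of $E$. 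A fibre of $p$, being a connected union of $p$-contracted curves, is therefore $p^+$-contracted as well, so it lies in — and, having the same dimension $k-1$, equals — a fibre of $p^+$, and symmetrically. Thus $p$ and $p^+$ have identical fibres, so the resulting identification $P\simeq P^+$ intertwines them and $\varphi|_E$, $\varphi^+|_E$ are isomorphic fibrations, a contradiction. Therefore $\dim P\ge k-1$, as claimed.

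The substance of the argument is the construction in the second step of a curve lying simultaneously in a fibre of $p$ and a fibre of $p^+$ — this is exactly where the assumed dimension deficiency enters, and I expect it to be the only real point; everything afterward is formal. The items deserving a little care are that a single well-defined line class governs all fibres of each projective bundle, and the implication ``same contracted curves $\Rightarrow$ same fibres'' (which uses only connectedness of the fibres and normality of $P$, $P^+$); and if $P$, hence $E$, were disconnected one would simply run the argument on the connected component of $E$ containing $f$.
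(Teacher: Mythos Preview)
Your proof is correct and rests on the same idea as the paper's: the map $(p,p^+)\colon E\to P\times P^+$ is finite onto its image, because a curve simultaneously in a fibre of $p$ and a fibre of $p^+$ would force the two fibre-line classes to be proportional and hence the two fibrations to coincide. The paper simply invokes this as the standard fact that fibres of distinct extremal ray contractions meet only in points, obtaining $\dim E\le 2\dim P$ in one line; you instead assume the inequality fails, exhibit such a curve, and unwind the contradiction explicitly. So the route is essentially the same, with your version being a self-contained unpacking of the cited fact rather than a genuinely different argument.
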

  \begin{proof} The exceptional divisor $E$ has two projective bundle structures over $P$ and $P^+$. We denote these two projective bundle morphisms in the following diagram:
  $$\xymatrix{E\ar[r]^p\ar[d]_{p^+}&P\\P^+}$$ where $p=\varphi|_E$ and $p^+=\varphi^+|_E.$  Since fibers of different extremal ray contractions can meet only in points,  $(p,p^+):E\longrightarrow P\times P^+$ is a finite morphism to its image. So $\dim E\le 2 \dim P,$ which means that $\dim X\le 2 \dim P+1.$\end{proof}

   Now recall
  E.Sato's  classification  about varieties which admit two projective bundle structures (See Theorem A in ~\cite{Sato}).
  \begin{theorem}\label{E.Sato}
  If $E$ has two projective bundle structures over projective spaces:  $$\xymatrix{E \ar[r]^{p_1} \ar[d]_{p_2} & \mathbb P^n\\  \mathbb P^n}$$ then $E$ is isomorphic to  either
  $\mathbb P^n\times \mathbb P^n$ ( in this case,  $\dim E=2n$ ) or    $\mathbb P_{\mathbb P^n}(\Omega _{\mathbb P^n})$ (in this case, $\dim E=2n-1$).
  \end{theorem}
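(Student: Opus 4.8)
The plan is to exploit the two contractions $p_1, p_2 \colon E \to \P^n$, each of which contracts an extremal ray, and to analyze the restriction of one projective bundle structure to the fibers of the other. First I would fix a fiber $F \cong \P^n$ of $p_2$ and look at $p_1|_F \colon F \to \P^n$. Since $(p_1, p_2)$ is finite onto its image (fibers of distinct extremal contractions meet in at most finitely many points), $p_1|_F$ is a finite morphism $\P^n \to \P^n$, hence — because any finite morphism between projective spaces of the same dimension from a smooth source has image all of $\P^n$ — it is surjective, and in fact, since $\P^n$ is simply connected and $p_1|_F$ is finite and unramified in codimension zero, one argues it must be an isomorphism or a map of some degree $d$. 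I would pin down $d$ using the relation between $\mathcal O_{\P^n}(1)$ pulled back along $p_1$ and the relative $\mathcal O(1)$ of the bundle $p_2$, together with the fact that $E$ is a $\P^n$-bundle over each factor so $\mathrm{Pic}(E) \cong \Z^2$. Writing $h_i = p_i^* \mathcal O_{\P^n}(1)$, the two rulings give two classes spanning $\mathrm{Pic}(E)$, and computing intersection numbers (or the structure of the nef cone, whose two boundary rays are $h_1$ and $h_2$) forces $d = 1$; hence $p_1|_F$ is an isomorphism.

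Once $p_1$ restricts to an isomorphism on every $p_2$-fiber, the two projections identify $E$ with an incidence-type variety: the map $(p_1,p_2)\colon E \hookrightarrow \P^n \times \P^n$ is a closed embedding, and $E$ is a divisor (when $\dim E = 2n-1$) or the whole product (when $\dim E = 2n$) by dimension count. In the divisor case I would compute its class: $E \subset \P^n \times \P^n$ is a divisor of bidegree $(a,b)$, and the condition that both projections $E \to \P^n$ are $\P^{n-1}$-bundles (each fiber is a hyperplane $\P^{n-1}$, linearly embedded in the other $\P^n$) forces $(a,b) = (1,1)$, so $E$ is the universal hyperplane $\{(x,H) : x \in H\}$, which is exactly $\P_{\P^n}(\Omega_{\P^n})$ (equivalently $\P_{\P^n}(T_{\P^n}(-1))$, the flag variety $\mathrm{Fl}(1,n;n+1)$). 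In the case $\dim E = 2n$ the embedding is onto $\P^n \times \P^n$ and we are done immediately.

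The main obstacle is the step showing $p_1|_F$ is an isomorphism rather than merely a finite surjection of higher degree: a priori $p_1|_F$ could be a degree-$d$ cover of $\P^n$, and ruling this out is where one must use global structure — the computation of $\mathrm{Pic}(E) = \Z h_1 \oplus \Z h_2$ and of the two extremal contractions, or alternatively a direct argument that a finite morphism $\P^n \to \P^n$ that appears as the restriction of one $\P^n$-bundle projection to the fibers of another must have degree one (for instance via Lazarsfeld's theorem that a finite surjection $\P^n \to \P^n$... no — rather, via the fact that $h_1|_F = \mathcal O_{\P^n}(d)$ would make $h_1$ non-primitive in a way incompatible with $h_1$ generating a ruling of a projective bundle). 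I would organize the bundle-theoretic bookkeeping carefully here, since everything else (the embedding into $\P^n \times \P^n$ and the bidegree computation) is then routine intersection theory on the product.
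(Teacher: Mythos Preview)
The paper does not prove this theorem; it is quoted from Sato \cite{Sato}, and only a sketch of the $\dim E = 2n-1$ case appears in Remark~\ref{sketch}. Your outline follows that sketch closely: show $\Phi = (p_1,p_2)\colon E \to \P^n \times \P^n$ is a closed immersion, then in the divisor case show the image has bidegree $(1,1)$ and identify it with the incidence variety $\P_{\P^n}(\Omega_{\P^n})$ via the Euler sequence. The paper's sketch makes explicit one step you handle only implicitly: once the bidegree is $(1,1)$, the defining bilinear form must be \emph{nondegenerate} (your observation that each fiber of one projection is a genuine hyperplane in the other $\P^n$ is exactly why), and after a linear change of coordinates it becomes $\sum_i X_i Y_i = 0$.

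Two small issues with your write-up. First, your opening ``fix a fiber $F \cong \P^n$'' conflates the two cases --- in the divisor case the $p_2$-fibers are $\P^{n-1}$, as you correctly say later, so the case split should come before you name $F$. Second, neither you nor the paper's sketch addresses why the fiber dimension must be $n$ or $n-1$ (equivalently why $\dim E \in \{2n-1,2n\}$); the inequality $\dim E \le 2n$ follows from $\Phi$ being finite onto its image, but ruling out $\dim E \le 2n-2$ is part of Sato's full argument and is not reproduced in the paper. Your identification of the degree-one step for $\Phi$ as the main obstacle is accurate, and the Picard-group bookkeeping you propose (with $\mathrm{Pic}(E)$ free of rank $2$ and the two tautological classes spanning the nef cone) is the standard route.
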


 \bremark\label{sketch} We now sketch the proof of E.Sato's theorem in the case of $\dim E=2n-1$.  First, it can be proved that  $\Phi=(p_1,p_2): E \longrightarrow \mathbb P^n\times \mathbb P^n$ is  a closed immersion. Then $\Phi(E)$ is a Cartier divisor in $\mathbb P^n\times \mathbb P^n,$  we denote the defining equation of $\Phi(E)$ by $F(X_0，\cdots, X_n; Y_0，\cdots, Y_n).$ The key point is  $F$ is a homogeneous polynomial of bidegree $(1,1).$  Moreover,  it can be shown that after performing  suitable linear transforms  of $(X_0，\cdots, X_n)$ and $(Y_0，\cdots, Y_n),$ $F$ is of the form $F=\sum_{i=0}^{n}{X_i\cdot Y_i}.$  If we view $ \mathbb P_{\mathbb P^n}(\Omega _{\mathbb P^n})$ as a closed subvariety of $\P^n\times (\P^n)^*$ by the Euler sequence:
 $$0\>\Omega_{\mathbb P^n}\>\mathcal{O}(-1)^{n+1}\> \mathcal{O}\>0,$$ then $E$ has the same defining equation as  that of $ \mathbb P_{\mathbb P^n}(\Omega _{\mathbb P^n}).$\eremark

  Now we generalise E.Sato's result to a relative version.

  \begin{theorem}\label{main theorem}  Suppose that $E$ admits two projective bundle structures:$$\xymatrix{E \ar[r]^{p_1} \ar[d]_{p_2} & P_1\\  P_2}$$ If the following conditions are satisfied:
  \bd
  \item $P_1$ and $P_2$ are projective bundles over a common variety $C,$ i.e.  $P_1=\mathbb P_{C}(\mathcal E_1)$ and $P_2=\mathbb P_{C}(\mathcal E_2)$ for some locally free sheaves  $\mathcal E_i$ on $C$. \item Let $\pi_i: P_i \longrightarrow C$ denote the projective bundle morphism, we assume that $\pi_1\circ p_1=\pi_2\circ p_2.$
  \item $\dim P_1=\dim P_2.$

  \ed then $E$ is isomorphic to either $P_1\times_C P_2$ or
 $\mathbb P_{ P_1}(\Omega _{P_1/C}).$   In the first case, $\dim E=2\dim P_1-\dim C;$ in the second case, $\dim E =2\dim P_1-\dim C-1.$

  \end{theorem}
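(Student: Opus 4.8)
The plan is to reduce to E.~Sato's theorem (Theorem~\ref{E.Sato}) by working fibrewise over $C$, and then to globalise. Put $n=\dim P_1-\dim C=\dim P_2-\dim C$ for the common relative dimension, so that $\mathcal E_1,\mathcal E_2$ have rank $n+1$. For each $c\in C$ one has $(P_i)_c\cong\mathbb P^n$, and $p_i$ restricts by base change to an honest projective bundle $E_c\to(P_i)_c$; thus $E_c$ is a smooth connected variety carrying two projective bundle structures over $\mathbb P^n$, of the same fibre dimension $\dim E-\dim P_1$. By Theorem~\ref{E.Sato}, either $E_c\cong\mathbb P^n\times\mathbb P^n$ for every $c$, or $E_c\cong\mathbb P_{\mathbb P^n}(\Omega_{\mathbb P^n})$ for every $c$; which alternative occurs depends only on $\dim E$ (on whether $\dim E-\dim P_1$ equals $n$ or $n-1$), which is why it is uniform in $c$. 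In both cases I would study the $C$-morphism $\Phi=(p_1,p_2)\colon E\to P_1\times_C P_2$, observing that $P_1\times_C P_2=\mathbb P_{P_1}(\pi_1^*\mathcal E_2)$ is smooth and that, by the analysis recalled in Remark~\ref{sketch}, $\Phi_c$ is an isomorphism in the first case and a closed immersion onto a divisor of bidegree $(1,1)$ in the second. In the first case $\Phi$ is proper (both sides are projective over $C$) and quasi-finite (each $\Phi_c$ is injective), hence finite, and it is birational because it is a fibrewise isomorphism; as a finite birational morphism onto the normal variety $P_1\times_C P_2$ it is then an isomorphism, so $E\cong P_1\times_C P_2$ and $\dim E=2\dim P_1-\dim C$.

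Assume from now on that we are in the second case. I would first promote $\Phi$ to a closed immersion. It is proper; it is universally injective, since two points of $E$ with a common image under $\Phi$ lie in a single fibre $E_c$, on which $\Phi$ restricts to a monomorphism; and it is unramified, because $\Omega_{E/(P_1\times_C P_2)}$ restricts on each fibre to $\Omega_{E_c/((P_1)_c\times(P_2)_c)}=0$ and therefore vanishes by Nakayama's lemma. A proper, unramified, universally injective morphism is a closed immersion, so $\Phi$ identifies $E$ with an effective divisor $D\subset P_1\times_C P_2$.

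It remains to recognise $D$. Write $q_i\colon P_1\times_C P_2\to P_i$ for the projections. From the structure of $\mathrm{Pic}(P_1\times_C P_2)$ (generated by $q_1^*\mathcal O_{P_1}(1)$, $q_2^*\mathcal O_{P_2}(1)$ and the classes pulled back from $C$) together with the fact that $D$ has bidegree $(1,1)$ on every fibre $(P_1)_c\times(P_2)_c$, one gets $\mathcal O(D)\cong q_1^*\mathcal O_{P_1}(1)\otimes q_2^*\mathcal O_{P_2}(1)\otimes(\text{pullback of a line bundle }\mathcal M\text{ on }C)$. Pushing the defining section of $D$ forward to $C$ exhibits it as an $\mathcal O_C$-bilinear pairing $\mathcal E_1\otimes_{\mathcal O_C}\mathcal E_2\to\mathcal M$, which is non-degenerate on every fibre (smoothness of $D_c$ excludes degenerate $(1,1)$-forms), hence an isomorphism $\mathcal E_2\cong\mathcal E_1^\vee\otimes\mathcal M$. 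Under this identification $D$ is the relative incidence variety inside $\mathbb P_C(\mathcal E_1)\times_C\mathbb P_C(\mathcal E_1^\vee\otimes\mathcal M)$, and the relative Euler sequence on $P_1=\mathbb P_C(\mathcal E_1)$ identifies it with $\mathbb P_{P_1}(\Omega_{P_1/C})$ (the twist by $\mathcal M$ being invisible to projectivisation). Hence $E\cong\mathbb P_{P_1}(\Omega_{P_1/C})$ and $\dim E=2\dim P_1-\dim C-1$.

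I expect the second case to be the main obstacle. The first case is essentially formal once $\Phi$ is known to be a fibrewise isomorphism; but the second requires genuinely new work relative to Sato's absolute statement: upgrading the fibrewise closed immersion to a global one, controlling $\mathrm{Pic}(P_1\times_C P_2)$ in order to produce the twisting line bundle $\mathcal M$, and running the relative Euler-sequence computation with the correct twists. This last piece of vector-bundle bookkeeping --- identifying the relative $(1,1)$-divisor with the relative projectivised cotangent bundle --- is the heart of the argument.
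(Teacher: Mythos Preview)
Your proposal is correct and follows essentially the same route as the paper: reduce fibrewise to Sato's theorem, study the morphism $\Phi=(p_1,p_2)$ to $P_1\times_C P_2$, and in the codimension-one case identify the image divisor via the bilinear pairing coming from its $(1,1)$ class and the relative Euler sequence. The only differences are in implementation details---you use the criterion ``proper $+$ universally injective $+$ unramified $\Rightarrow$ closed immersion'' and invoke smoothness of $D_c$ for non-degeneracy, whereas the paper argues finiteness via the rigidity lemma and checks non-degeneracy by an explicit local matrix computation---but the strategy is the same.
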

  \begin{proof} We assume that  the rank of $\mathcal E_i$ is $k+1$ and let $\alpha=\pi_1\circ p_1.$ Then the following diagram :
     $$\xymatrix{E\ar[rd]^{\alpha}\ar[r]^{p_1}\ar[d]_{p_2}&P_1\ar[d]^{\pi_1}\\P_2\ar[r]_{\pi_2}&C}$$ commutes. Since every fiber of $\alpha$ has two projective bundle structures over projective spaces, by Theorem \ref{E.Sato}, $\dim E=2\dim P_1-\dim C$ or $\dim E =2\dim P_1-\dim C-1.$ \\

   In the case of  $\dim E=2\dim P_1-\dim C,$ we aim to show that $E\simeq P_1\times_C P_2. $\\ Note that we have the following commutative diagram:  $$\xymatrix{ E\ar[rr]^{\Phi=(p_1,p_2)}\ar[rd]_{\alpha}& &P_1\times_C P_2\ar[ld]\\&C}$$ \\ For any $c\in C,$   let $E_c$ be the fiber of $\alpha.$  Then by Theorem \ref{E.Sato}, the restriction of $\Phi$ to $E_c:$ $$\Phi_c: E_c\longrightarrow \pi_1^{-1}(c)\times\pi_2^{-1}(c)\simeq \mathbb P^k\times \mathbb P^k $$ is an isomorphism. So $\Phi$ is an isomorphism.\\

 In the case of $\dim E=2\dim P_1-\dim C-1,$ we aim to show that $E\simeq \mathbb P_{ P_1}(\Omega _{P_1/C}).$\\

  We note that $\Phi=(p_1,p_2): E\longrightarrow P_1\times_C P_2$ is a closed immersion. First, $\Phi$ is finite onto its image. Otherwise, there will be a curve contracted by both $p_i.$ Since for any curve $C$ of $\P^k,$ deformations of $C$ cover $\P^k,$ then there is a fibre of $p_1$ contracted by $p_2.$ So by the rigidity lemma, $p_1=p_2$ as fibrations, which is a contradiction to our assumptions. Also, for an arbitrary $c\in C,$ by Theorem \ref{E.Sato},  $\Phi_c: E_c\longrightarrow \pi_1^{-1}(c)\times\pi_2^{-1}(c)\simeq \mathbb P^k\times \mathbb P^k $ is a closed immersion. Moreover, $E_c$ is a prime divisor defined by a homogeneous polynomial  of bidegree $(1,1).$ Then $\Phi$ is a finite morphism to its image of degree one, so $\Phi$ is a closed immersion. \\

  Let $X=P_1\times_C P_2,$   $q_i: X\>P_i$ denote the projection morphism and $f:X\>C$ be the base morphism.  We view $E$ as a prime divisor of $X,$  then $\mathcal O_X(E) \simeq q_1^*\mathcal O(1)\otimes q_2^*\mathcal O(1)\otimes f^*\mathcal L$ where $\mathcal O(1)$ denotes the tautological line bundle of a given projective bundle and $\mathcal L$ is a line bundle of $C.$ Note that $f_*(\mathcal O_X(E))\simeq \mathcal E_1^{\vee}\otimes \mathcal E_2^{\vee}\otimes \mathcal L.$ Since $E$ is an effective divisor, there is a non-zero global section of $H^0(C,f_*(O_X(E)))=H^0(X, \mathcal O_X(E) )$. Then this global section induces a morphism of sheaves $\mathcal O\>\mathcal E_1^{\vee}\otimes \mathcal E_2^{\vee}\otimes \mathcal L,$  hence a linear form $\beta: \mathcal E_1\otimes \mathcal E_2 \>\mathcal L.$ \\

  Actually, the linear form $\beta$ is non-degenerate. For any affine open subset $Spec$$ A$ of $C, $ $\Phi(E) $ is a Cartier divisor of $P_1\times_A P_2.$ Then $\Phi(E)$ is defined by a single equation $F(X_0，\cdots, X_k; Y_0，\cdots, Y_k)=0$ whose coefficients are elements of $A.$  For any local sections of $\mathcal E_1$ and $\mathcal E_2$: $e_1=(x_0，\cdots, x_k)$,  $e_2=(y_0，\cdots, y_k)$, we have that $\beta(e_1,e_2)=F(x_0，\cdots, x_k; y_0，\cdots, y_k).$ Since $F$ is a homogeneous polynomial of bidegree $(1,1)$,  we can write $\beta$ as :\begin{equation}
  \beta(e_1,e_2)=
  \left(
  \begin{array}{ccc}
          x_{0} &
          \cdots &
          x_{k}
  \end{array}
  \right)
  M
  \left(
  \begin{array}{c}
          y_{0} \\
          \vdots \\
          y_{k}
 \end{array}
 \right)
\end{equation} where $M$ is a matrix in $M_n(A).$
By Theorem \ref{E.Sato} and Remark \ref{sketch}, $\det(M)$ is invertible in $A_m$ for any maximal ideal $m$ of $A,$ so $M$ is an invertible matrix, which means that the linear form $\beta(-,-)$is non-degenerate.  So $\mathcal E_1\simeq \mathcal E_2^{\vee}\otimes \mathcal L.$ Since $\P(\mathcal E_2\otimes \mathcal L^{-1})\simeq \P(\mathcal E_2),$ in what follows we assume that $\mathcal E_1\simeq \mathcal E_2^{\vee}.$

By the relative Euler sequence $0\>\Omega _{P_1/C}\>\pi_1^*(\mathcal E_1^{\vee})(-1)\>\mathcal O\>0,$ $\mathbb P_{ P_1}(\Omega _{P_1/C})$ is a closed subvariety of $P_1\times_C P_2.$ As closed subvarieties of $P_1\times_C P_2,$ on any affine piece of $C,$ $E$ and  $\mathbb P_{ P_1}(\Omega _{P_1/C})$ are defined by the same equation $\sum X_i\cdot Y_i=0,$  so    $E\simeq \mathbb P_{ P_1}(\Omega _{P_1/C}).$   \end{proof}

Now we assume that $P$ and $P^+$ are projective bundles over a common variety $C$, i.e. $P\simeq \mathbb P_{C}(\mathcal E) $ and $P^+\simeq \mathbb P_{C}(\mathcal E^+)$ where $\mathcal E$, $\mathcal E^+$ are locally free sheaves of rank $n+1$ on $C$, $\pi:P=\mathbb P_{C}(\mathcal E)\longrightarrow C$ (resp. $\pi^+:P^+=\mathbb P_{C}(\mathcal E^+)\longrightarrow C$) is the projective bundle morphism.
     We let $p=\varphi|_E$ and $p^+=\varphi^+|_E.$

    We summarise all the morphisms of our problem in the following diagram:
     $$\xymatrix{& &E\ar[d]\ar[ddrr]^{p^+}\ar[ddll]_{p}\\ & &\tilde X\ar[dl]_{\varphi}\ar[dr]^{\varphi^+}\\P\ar[r]\ar[rrd]_{\pi}&X\ar@{--}[r]&\theta \ar@{-->}[r] &X^{+}&P^{+}\ar[l]\ar[lld]^{\pi^+}\\&&C&&}.$$
     \begin{theorem}\label{main result} Suppose that $\theta: (X,P)\dasharrow (X^+,P^+)$ is a birational map of simple type, where $(P,\pi)$ and $(P^+,\pi^+) $ are projective bundles over a common variety $C$ and $\pi\circ p=\pi^+\circ p^+.$ Then $\theta$ is a  standard flop or a twisted Mukai flop over base $C.$\end{theorem}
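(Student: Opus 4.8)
The plan is to combine Theorem~\ref{main theorem} with a careful analysis of the normal bundle $N_{P/X}$, reading it off from the geometry of the exceptional divisor $E\simeq \P_C(\mathcal E)\times_C\P_C(\mathcal E^+)$ or $E\simeq \P_{P}(\Omega_{P/C})$ and matching it against the two model constructions recalled in the introduction. First I would invoke Theorem~\ref{main theorem}: since $E$ admits two projective bundle structures $p,p^+$ over $P=\P_C(\mathcal E)$ and $P^+=\P_C(\mathcal E^+)$ with $\pi\circ p=\pi^+\circ p^+$ and $\dim P=\dim P^+$ (the latter from Lemma~2.1), we are in exactly one of two cases: $E\simeq P\times_C P^+$ (the ``flop case'') or $E\simeq\P_{P}(\Omega_{P/C})$ with $\mathcal E^+\simeq\mathcal E^{\vee}$ up to twist (the ``Mukai case'').

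In the flop case, $p\colon E=P\times_C P^+\to P$ is the pullback $\pi^{\prime*}\mathcal E^+\otimes\mathcal O(1)$-bundle, i.e.\ $E=\P_P(\pi^*\mathcal E^+\otimes\sO_P(-1))$ for a suitable normalization; concretely $p$ is the projectivization of a rank-$(n+1)$ bundle on $P$. Since $E$ is the exceptional divisor of the blowup $\varphi\colon\tilde X\to X$ along $P$, we have $E=\P_P(N_{P/X}^{\vee})$ and $\sO_E(E)=\sO_{\P(N^\vee_{P/X})}(-1)$. Comparing the two identifications of the $\P^n$-bundle $E\to P$ together with the computation $\sO_E(E)|_F\simeq\sO(-1)$ on a fibre $F$ of $p^+$ (exactly as in the model construction) pins down $N_{P/X}\simeq\pi^*\mathcal E^+\otimes\sO_P(-1)$, possibly after twisting $\mathcal E^+$ by a line bundle from $C$, which does not change $\P_C(\mathcal E^+)$. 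The key point here is that a projective bundle structure together with the class of $\sO_E(E)$ determines the relevant bundle up to such a twist; I would cite Chapter~11 of \cite{DH} / Section~1 of \cite{W.C.L} for the precise bookkeeping. By Remark~\ref{W.C} (invoking Proposition~1.3 of \cite{W.C.L}), $X^+$ is the standard flop of $X$ over $C$, and uniqueness of the contraction forces $\theta$ to be that standard flop.

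In the Mukai case, $E\simeq\P_P(\Omega_{P/C})$ and the blowup description $E=\P_P(N_{P/X}^{\vee})$ together with $\sO_E(E)=\sO_{\P(N^\vee_{P/X})}(-1)$ and the fibrewise computation $\sO_E(E)|_F\simeq\sO(-1)$ identify $N_{P/X}^{\vee}$ with $\Omega_{P/C}$ up to a twist by a line bundle on $P$ pulled back from $C$; since $\mathrm{Pic}$ of a $\P^n$-bundle over $C$ is $\pi^*\mathrm{Pic}(C)\oplus\Z\sO_P(1)$ and the relevant tautological class is already fixed by $\sO_E(E)$, the twist comes from $\mathrm{Pic}(C)$, giving $N_{P/X}\simeq\Omega_{P/C}\otimes\pi^*\mathcal L$ for some $\mathcal L\in\mathrm{Pic}(C)$. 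That is precisely the hypothesis of the twisted Mukai flop construction, and as before Remark~\ref{W.C} (Proposition~6.1 of \cite{W.C.L}) makes $X^+$ projective and identifies it with the twisted Mukai flop of $X$ over $C$; uniqueness of the Fujiki--Nakano blow-down then shows $\theta$ is that flop.

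I expect the main obstacle to be the \emph{twist bookkeeping}: extracting $N_{P/X}$ from the data $(E\to P,\ \sO_E(E))$ requires care because a projectivized bundle only determines the bundle up to tensoring by a line bundle from the base, and one must check that the freedom is exactly absorbed by the $\P_C(\mathcal E^+)\simeq\P_C(\mathcal E^+\otimes\mathcal N)$ ambiguity (flop case) or by the choice of $\mathcal L$ (Mukai case), with no leftover obstruction living on $P$ rather than $C$. The computation $\sO_E(E)|_F\simeq\sO(-1)$ on the $p^+$-fibres is what rules out any residual twist along the $P^+$-direction; I would do this on an affine cover of $C$ where everything trivializes, reducing to the absolute statements in \cite{DH}, and then glue. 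Once $N_{P/X}$ is identified, the remaining assertion—that $\theta$ coincides with the model flop rather than merely sharing its local data—follows from the uniqueness of the blow-down in Fujiki--Nakano's criterion together with Remark~\ref{W.C}.
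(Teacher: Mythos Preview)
Your overall strategy matches the paper's: invoke Theorem~\ref{main theorem} to reduce to the two cases $E\simeq P\times_C P^+$ or $E\simeq \P_P(\Omega_{P/C})$, write $N_{P/X}$ as the model bundle tensored with an unknown line bundle $\mathcal L=\pi^*\mathcal L'\otimes\mathcal O_P(-d)$, absorb $\pi^*\mathcal L'$ into $\mathcal E^+$ (resp.\ keep it as the twist), and then determine $d$. The difference lies in how $d$ is computed. You propose to read it off directly from the identification of $\mathcal O_E(E)$ with the relative $\mathcal O(-1)$ of $\P_P(N_{P/X})$ together with the restriction $\mathcal O_E(E)|_F\simeq\mathcal O(-1)$ on a fibre $F$ of $p^+$; this is valid and arguably the most direct route. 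The paper instead goes through the canonical bundle: it writes $\omega_E\simeq\varphi^*\omega_X|_E\otimes\mathcal O_E((n+1)E)$, expands $\varphi^*\omega_X|_E\simeq p^*(\omega_P\otimes(\det N_{P/X})^{-1})$, restricts to a fibre $F$ of $p^+$ (where $p|_F$ is an isomorphism onto $\pi^{-1}(c)\simeq\P^n$ in the first case, and a hyperplane embedding into $\P^{n+1}$ in the second), and compares with $\omega_E|_F\simeq\mathcal O(-n-1)$ to obtain $d=1$ (standard flop) or $d=0$ (twisted Mukai). Both arguments use the same key input $\mathcal O_E(E)|_F\simeq\mathcal O(-1)$, which holds simply because $\varphi^+$ is a blowup with exceptional divisor $E$; your phrase ``exactly as in the model construction'' is slightly misleading, since this fact comes from the hypotheses of simple type, not from already knowing $\theta$ is a model flop. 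Two minor points: with the paper's convention $\P(\mathcal E)=\mathrm{Proj}(\mathrm{Sym}(\mathcal E^\vee))$ the exceptional divisor is $\P_P(N_{P/X})$, not $\P_P(N_{P/X}^\vee)$; and your appeal to Remark~\ref{W.C} and uniqueness of the Fujiki--Nakano blow-down is unnecessary here, since $X^+$ is projective by assumption and once $N_{P/X}$ has the prescribed form, $\theta$ is by definition the corresponding flop.
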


     \begin{proof}Since $E$ has two projective bundle structures over $P$ and $P^+$, by Theorem \ref{main theorem}, $E\simeq P\times_C P^+$ or $E\simeq\mathbb P_{ P}(\Omega _{P/C}).$\\

    Assume first that $E\simeq P\times_C P^+.$
     Since $E \simeq   \mathbb P_{ P}(\pi^*\mathcal E^+)$ and $E\simeq \mathbb P_{ P}( N_{P/X}) ,$  we can assume that $ N_{P/X}=\pi^*\mathcal E^+\otimes \mathcal L$ where  $\mathcal L$ is an invertible sheaf of $P.$  Suppose that  $\mathcal L=\pi^*\mathcal L'\otimes \mathcal O (-d),$ where $\mathcal O(1)$ is the tautological line bundle of $\pi:P\>C$ and $\mathcal L'$ is a line bundle of $C.$ Since $\mathbb P_{C}(\mathcal E^+)=\mathbb P_{C}(\mathcal E^+\otimes \mathcal L'),$  we can assume that $ N_{P/X}=\pi^*\mathcal E^+\otimes\mathcal O (-d). $  We aim to show that $d=1.$ \\

     Since $\varphi$ is a blowup, $\omega_E\simeq \varphi^*\omega_X|_E\otimes \mathcal O_E((n+1)E)$ where $n+1=codim(P,X).$
Let $h=p \circ \pi.$ We now calculate $\varphi^*\omega_X|_E\simeq p^*\omega_X|_P$ as follows: \begin{equation} \label{E}
       \varphi^*\omega_X|_E  \simeq p^*\omega_P\otimes(\det N_{P/X})^{-1}\simeq p^*\omega_P\otimes h^*(\det\mathcal{E^+})^{-1}\otimes p^*\mathcal{O}((n+1)d).
     \end{equation}
So $\omega_E\simeq p^*\omega_P\otimes h^*(\det\mathcal{E^+})^{-1}\otimes p^*\mathcal{O}((n+1)d)\otimes \mathcal O_E((n+1)E).$
      For any fiber $F\simeq \P^n$ of $p^+,$   $h(F)=c$ for some point $c\in C.$ The morphism  $p$ maps $F$ isomorphically to $\pi^{-1}(c)=\P^n,$ which is illustrated as follows:$$\xymatrix{
          F \ar[d]_{p|_F} \ar[r]^{} & E \ar[rd]^h \ar[d]_{p} \ar[r]^{p^+} & P^+ \ar[d]^{\pi^+} \\
           \pi^{-1}(c)\ar[r]^{} & P \ar[r]^{\pi} & C   }$$
           Since $\mathcal O_E(E)|_F\simeq \mathcal O(-1),$ then we have that $$\{p^*\omega_P\otimes h^*(\det\mathcal{E^+})^{-1}\otimes p^*\mathcal{O}((n+1)d)\otimes \mathcal O_E((n+1)E)\}|_F\simeq \mathcal O((n+1)(d-2)).$$ We know that $\omega_E|_F\simeq \mathcal{O}(-n-1),$ so $d=1$ which means that $N_{P/X}\simeq \pi^*\mathcal E^+\otimes \mathcal O(-1)$ and $\theta$ is a standard flop over $C.$\\

Assume now that $E\simeq \mathbb P_{ P}(\Omega _{P/C}).$ By Theorem \ref{main theorem}, we know that $\mathcal E\simeq \mathcal E^{+\vee}$ and $E\hookrightarrow P\times_C P^+$ is a closed immersion.
     Since $E\simeq \mathbb P_{ P}( N_{P/X}) ,$  we can assume that $ N_{P/X}=\Omega_{p/C}\otimes \mathcal L$ where  $\mathcal L$ is an invertible sheaf of $P.$  Suppose that  $\mathcal L=\pi^*\mathcal L'\otimes \mathcal O (-d),$ where $\mathcal O(1)$ is the tautological line bundle of $\pi:P\>C$ and $\mathcal L'$ is a line bundle of $C.$   We aim to show that $d=0.$ \\

     Since $\varphi$ is a blowup, $\omega_E\simeq \varphi^*\omega_X|_E\otimes \mathcal O_E((n+1)E)$ where $n+1=codim(P,X).$
Let $h=p \circ \pi,$ we now calculate $\varphi^*\omega_X|_E\simeq p^*\omega_X|_P$ as follows: \begin{equation} \label{F}
       \varphi^*\omega_X|_E  \simeq p^*\omega_P\otimes(\det N_{P/X})^{-1}\simeq  h^*\omega_C\otimes p^*\mathcal{L}^{\otimes(-n-1)}.
     \end{equation}
So $\omega_E\simeq h^*\omega_C\otimes p^*\mathcal{L}^{\otimes(-n-1)}\otimes \mathcal O_E((n+1)E).$
      For any fiber $F\simeq \P^n$ of $p^+,$   $h(F)=c$ for some point $c\in C.$ The morphism  $p$ maps $F$  into $\pi^{-1}(c)=\P^{n+1}$ as a hyperplane,  which is illustrated as follows:$$\xymatrix{
          F \ar[d]_{p|_F} \ar[r]^{} & E \ar[rd]^h \ar[d]_{p} \ar[r]^{p^+} & P^+ \ar[d]^{\pi^+} \\
           \pi^{-1}(c)\ar[r]^{} & P \ar[r]^{\pi} & C   }$$
           Since $\mathcal O_E(E)|_F\simeq \mathcal O(-1),$ then we have that $$\{ h^*\omega_C\otimes p^*\mathcal{L}^{\otimes(-n-1)}\otimes \mathcal O_E((n+1)E)\}|_F\simeq \mathcal O((n+1)(d-1)).$$ We know that $\omega_E|_F\simeq \mathcal{O}(-n-1),$ so $d=0$ which means that $N_{P/X}\simeq \Omega_{P/C}\otimes \pi^*\mathcal L'$ and $\theta$ is a twisted Mukai flop over $C.$

      \end{proof}

     Note that by Lemma \ref{lower bound}, we know that there is a lower bound for the dimension of $P.$ As an application of Theorem \ref{main result}, we will classify the birational morphism $\theta$ when $\dim P $  reaches the lower bound. Here, what is different from Theorem \ref{main result}, we don't assume that $P$ and $P^+$ are projective bundles in advance, actually we obtain a result as follows:

     \begin{theorem}\label{2}
       \bd
         \item If $\dim X=2\dim P+1,$ then $\theta$ is a standard flop.
         \item If $\dim X=2\dim P,$ then $\theta$ is a Mukai flop or a  standard flop over a curve.
       \ed
     \end{theorem}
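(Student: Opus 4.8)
The strategy is to reduce both parts to Theorem~\ref{main result}. Write $m=\dim P=\dim P^{+}$ (these agree by the opening lemma of this section), and recall that the common exceptional divisor $E$ is a $\P^{r}$--bundle over both $P$ and $P^{+}$, where $r=\operatorname{codim}(P,X)-1$. Since $\dim E=\dim X-1$ we get $\dim X-1=m+r$, so $r=m$ in case (1) and $r=m-1$ in case (2); correspondingly $\dim E=2m$ in case (1) and $\dim E=2m-1$ in case (2). In either case it suffices to exhibit a variety $C$ together with projective bundle structures $\pi\colon P\to C$ and $\pi^{+}\colon P^{+}\to C$ satisfying $\pi\circ p=\pi^{+}\circ p^{+}$, where $p=\varphi|_{E}$ and $p^{+}=\varphi^{+}|_{E}$. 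Indeed, Theorem~\ref{main result} then shows $\theta$ is a standard flop or a twisted Mukai flop over $C$, while by Theorem~\ref{main theorem} the former forces $\dim C=2m-\dim E$ and the latter $\dim C=2m-\dim E-1$. In case (1) ($\dim E=2m$) this rules out the twisted Mukai alternative and leaves $\dim C=0$, a standard flop. In case (2) ($\dim E=2m-1$) it leaves either $\dim C=1$, a standard flop over a curve, or $\dim C=0$ with $C$ a point, in which case the twisted Mukai flop is a Mukai flop (a line bundle on a point being trivial).

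\textbf{Part (1).} Let $F\simeq\P^{m}$ be a fibre of $p^{+}$. As in the proof of Lemma~\ref{lower bound}, fibres of the distinct extremal contractions $p$ and $p^{+}$ meet in finite sets, so $(p,p^{+})\colon E\to P\times P^{+}$ is finite onto its image; hence $p|_{F}\colon F\to P$ is finite, and since $\dim F=m=\dim P$ with $P$ irreducible, $p|_{F}$ is surjective. Thus $P$ receives a finite surjective morphism from $\P^{m}$, so by Lazarsfeld's theorem (the solution of the Remmert--Van de Ven problem) $P\simeq\P^{m}$; symmetrically $P^{+}\simeq\P^{m}$. Taking $C=\Spec\C$ (so that $\pi,\pi^{+}$ are the structure morphisms and $\pi\circ p=\pi^{+}\circ p^{+}$ holds trivially), Theorem~\ref{main result} together with the dimension count above shows $\theta$ is a standard flop.

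\textbf{Part (2).} Now a fibre $F\simeq\P^{m-1}$ of $p^{+}$ maps, via $p$, finitely onto a hypersurface $D=p(F)\subset P$. To decide between the two outcomes I would use a numerical invariant. From adjunction and the blow--up formula, $\omega_{E}\simeq\varphi^{*}\omega_{X}|_{E}\otimes\mathcal O_{E}((r+1)E)\simeq p^{*}\bigl(\omega_{P}\otimes(\det N_{P/X})^{-1}\bigr)\otimes\mathcal O_{E}((r+1)E)$, and likewise with $p,P,X$ replaced by $p^{+},P^{+},X^{+}$; cancelling $\mathcal O_{E}((r+1)E)$ yields $p^{*}B\equiv (p^{+})^{*}B^{+}$ in $\pic(E)$, where $B=\det N_{P/X}\otimes\omega_{P}^{-1}$ and $B^{+}=\det N_{P^{+}/X^{+}}\otimes\omega_{P^{+}}^{-1}$. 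Intersecting with a line $\ell$ in a fibre of $p^{+}$ gives $B\cdot p_{*}\ell=0$; these rational curves $p_{*}\ell$ sweep out $P$ and lie in a single numerical class (the fibres of $p^{+}$ forming a connected family). So either $B\equiv 0$, i.e.\ $\det N_{P/X}\equiv\omega_{P}$, or $B\not\equiv0$ and this single curve class spans a proper extremal face of $\overline{NE}(P)$.

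In the first case $(P,N_{P/X})$ is a ``Mukai pair'' whose projectivization carries a second $\P^{m-1}$--bundle structure; using the classification results of \cite{K} and \cite{W} I expect to conclude $P\simeq P^{+}\simeq\P^{m}$ with $N_{P/X}$ a twist of $\Omega_{\P^{m}}$, so $C$ is a point and $\theta$ is a Mukai flop. In the second case, contracting the extremal face spanned by the above curve class (and passing to the Stein factorization, using smoothness of $E$ and $P$) should realize $P$ as a $\P^{m-1}$--bundle over a smooth curve $C$; the relation $p^{*}B\equiv(p^{+})^{*}B^{+}$ then identifies the corresponding structure on $P^{+}$ over the same $C$, compatibly with $p$ and $p^{+}$, and Theorem~\ref{main result} gives a standard flop over $C$.

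\textbf{Main obstacle.} The hard part is Part (2): showing that a smooth $(2m-1)$--fold with two $\P^{m-1}$--bundle structures over $m$--folds arising from a simple type flop is, up to the two structures, only $\P_{\P^{m}}(\Omega_{\P^{m}})$ or a fibre product of $\P^{m-1}$--bundles over a curve, in particular ruling out flag--type varieties of other types. This is where one must exploit not merely the two bundle structures but also that $E=\P_{P}(N_{P/X})$, that $\mathcal O_{E}(-E)$ is the relative hyperplane class for both structures, and the numerical constraint $p^{*}B\equiv(p^{+})^{*}B^{+}$ forced by K--equivalence, in tandem with the classification input of \cite{K} and \cite{W}; the small cases $m\le 3$ (where Lefschetz/Fujita--type identifications of $P$ require separate treatment) and the possibility that $p|_{F}$ is not birational onto $D$ also need care.
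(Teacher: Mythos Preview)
Your Part (1) is correct and essentially identical to the paper's argument: the finiteness of $(p,p^{+})$ forces $p|_{F}\colon\P^{m}\to P$ to be finite surjective, Lazarsfeld's theorem gives $P\simeq\P^{m}$, and Theorem~\ref{main result} over a point finishes.

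Part (2), however, has a genuine gap. The paper's proof is a single citation: Theorem~2 of Occhetta--Wi\'sniewski \cite{A} classifies smooth $(2m-1)$--folds carrying two distinct $\P^{m-1}$--bundle structures over $m$--folds, and the answer is exactly $\P_{\P^{m}}(\Omega_{\P^{m}})$ or $P\times_{C}P^{+}$ with $C$ a smooth curve and $P,P^{+}$ projective bundles over $C$. Feeding this into Theorem~\ref{main result} is then immediate. You instead try to \emph{reprove} this classification via the numerical invariant $B=\det N_{P/X}\otimes\omega_{P}^{-1}$, but the argument is openly incomplete (``I expect to conclude'', ``should realize''), and the references you invoke do not cover what you need: \cite{K} concerns Fano $5$--folds with nef tangent bundle and \cite{W} concerns $\P^{1}$--bundles, neither of which addresses the general $\P^{m-1}$--bundle situation for arbitrary $m$. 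Concretely, in the case $B\equiv 0$ nothing in \cite{K} or \cite{W} forces $P\simeq\P^{m}$ for $m>3$, and in the case $B\not\equiv 0$ knowing only that the curves $p_{*}\ell$ span an extremal face does not yet show the resulting contraction is a $\P^{m-1}$--bundle over a \emph{smooth curve} (you would still need equidimensionality of fibres, smoothness of the base, and that the fibration is locally trivial). These are exactly the ``flag--type'' obstructions you flag yourself; the point is that \cite{A} has already done this work, and the paper simply quotes it.
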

     \begin{proof} \begin{enumerate}\item If $\dim X=2\dim P+1,$ then $\dim E=2\dim P.$ So the morphism $(p,p^+):E\longrightarrow P\times P^+$ is surjective. For any fiber $F$ of $p^+$, $p|_{F}:F\longrightarrow P$ is surjective. By Lazarsfeld's theorem ~\cite{L}, $P$ is a projective space, similarly, $P^+$ is also a projective space. By Theorem \ref{main result}, $\phi$ is a standard flop.
     \item If $\dim X=2\dim P,$ then $\dim E=2\dim P-1.$ By Theorem 2 in ~\cite{A}, $E\simeq \mathbb P_{ P}(\Omega _{P})$ where $P$ and $P^+$ are projective spaces or $E\simeq  P\times_C P^+$ where $C$ is a smooth curve and $P$ (resp. $P^+$) is a projective bundle over $C.$ By Theorem \ref{main result}, $\phi$ is a Mukai flop or a standard flop.\end{enumerate}
     \end{proof}
\section{Classifications when $\dim X\le 5$}
     Now as an application of all the results we have obtained, we can classify the simple type birational maps when $\dim X\le5.$ As a first step, we have the following easy corollary from Theorem \ref{2}.
     \bcorollary\label{Corollary}
       \bd
         \item If $\dim X=3$, then by Lemma \ref{lower bound}, we have $2\dim P+1\ge 3$, so $\dim P=1$. Then $\theta$ is a standard flop by Theorem \ref{2}.
         \item If $\dim X=4$, then by Lemma \ref{lower bound}, we have $2\dim P+1\ge 4$, so $\dim P=2$. Then $\theta$ is a Mukai flop or a  standard flop by Theorem \ref{2}.
         \item If $\dim X=5$, then by Lemma \ref{lower bound}, we have $2\dim P+1\ge 5$, so $\dim P=2$ or $3$. If $\dim P=2,$ then $\theta$ is a Mukai flop or a  standard flop by Theorem \ref{2}.

       \ed
     \ecorollary
In the rest of this section, we keep the assumption that  $\dim X=5.$ As we see in Corollary \ref{Corollary}, the remaining unknown case is $\dim P=3.$ In this situation, the exceptional divisor $E$ admits two $\P^1$ bundle structures as follows:
$$\xymatrix{E \ar[r]^{p} \ar[d]_{p^+} & P\\  P^+}.$$ For an arbitrary fibre $F$ of $p^+,$ it is a natural question to ask whether $p(F)$ is extremal in the cone of curves $\bar{NE(P)}.$ Actually, we have the following lemma.
\blemma
If $E$ admits two $\P^1$ bundle structures as above,  then $p(F)$ is extremal. Furthermore, the contraction morphism $\pi: P\>C$ is smooth.\elemma
\bproof See Theorem 2.2, \cite{K}.\eproof
If the picard number $\rho(P)\ge 2,$ i.e. $\dim C\ge 1,$ we aim to show that $\theta$ is a standard flop or a twisted Mukai flop. First, there exists $\pi^+:P^+\>C$ making  the following diagram commutative: $$\xymatrix{E\ar[r]^{p}\ar[d]_{p^+}&P\ar[d]^{\pi}\\P^+\ar[r]_{\pi^+}&C}.$$ The main obstruction to apply our result Theorem
\ref{main result} is that we don't know,  a priori, whether $\pi$ or $\pi^+$ is a projective bundle. Actually, there is a criterion for projective bundles as follows:
\blemma Suppose that  $f:M\>S$ is a smooth $\P^k$ fibration. Let us consider the exact sequence $$0\>\G_m\>GL_{k+1}\>PGL_k\>0,$$ then we have an exact sequence of \'{e}tale cohomologies:$$H^1_{\acute{e}t}(S,GL_{k+1})\stackrel{d}\> H^1_{\acute{e}t}(S,PGL_{k})\>H^2_{\acute{e}t}(S,\G_m),$$ if $d$ is surjective, then $f$ is a projective bundle. In particular, when $\dim S=1,$ $H^2_{\acute{e}t}(S,\G_m)$ vanishes and $d$ is surjective.\elemma
\bproof See Theorem 0.1 in \cite{MM1} and Lemma 1.2 in \cite{MM2}.\eproof

\bremark\label{counterexample} Note that when $\dim Z\ge 2,$ the $H^2_{\acute{e}t}(S,\G_m)$ is not necessarily vanished, things become much more complicated. For example, we consider an arbitrary smooth $\P^1$ fibration $f:M\>S$ which is not a projective bundle. Let $R=M\times_S M,$ the projection $p: R\>M$ is always a projective bundle, as the diagonal morphism $\delta: M\> R$ is a section of $p$ (for details, see Chapter 3, Exercise 4.24 and Chapter 4,  \cite{Milne}).  This example shows that the projective bundle structure of $p$ can't decent to $f$, it is the main difficulty in the classification of simple type birational maps when $\dim X=5.$ \eremark

\blemma If $\dim C=1,$ then $\theta$ is a twisted Mukai  flop over $C.$ \\
If $\dim C=2,$ then $\theta$ is a  standard flop over $C.$
\elemma

\bproof First, we fix some notations. For any $c\in C,$ we let $P_c$ be the fibre of $\pi$ and $P^+_c$ be the fibre of $\pi^+.$ Let $h=\pi\circ p$ and $E_c=h^{-1}(c).$ Since $\pi$ is an elementary contraction, the picard numbers $\rho(P_c)=\rho(P^+_c)=1.$

We observe that the exceptional divisor $E$ admits two $\P^1$ bundle structures over $P$ and $P^+,$ so there are induced $\P^1$ bundle structures of $E_c$ over $P_c$ and $P^+_c.$\\

If $\dim C=1,$ then $\dim E_c=\dim P_c+\dim P^+_c-1.$  By Theorem 2 in ~\cite{A} and $\rho(P)=1,$ $P_z$ and $P^+_z$ are projective spaces. Since $C$ is a curve, $H^2_{\acute{e}t}(C, \G _m)=0,$ then $P$ and $P^+$ are projective bundles over $C.$ Then by Theorem \ref{main result}, $\theta$ is a twisted Mukai flop over $C.$\\

If $\dim C=2,$ then for any $c\in C,$ $\dim P_c=\dim P^+_c=1.$ Then $\dim E_c=\dim P_c+\dim P^+_c,$ by the same argument as in Theorem \ref{2}, we know that $P_c$ and $P^+_c$ are projective lines.  So $\pi$ and $\pi^+$ are smooth $\P^1$ fibrations. However, as we show in Remark \ref{counterexample}, the projective bundle structure of $p$ can't decent to $\pi.$ Fortunately, there is a criterion of projective bundles for elementary contractions, see Lemma \ref{maximal length} below.  Let $l=-P_c\cdot K_P.$ Since $K_P=\pi^*K_C\otimes K_{P/C},$  $l=2.$ Then $\pi$ is an elementary contraction of maximal length. By Lemma \ref{maximal length}, $\pi$ is a projective bundle. By Theorem \ref{main result}, we know that $\theta$ is a standard flop over $C.$  \eproof
\blemma\label{maximal length}Suppose that  $\pi: P\>C$ is an elementary contraction of an extremal ray $\Gamma.$ Let $l(\Gamma)$= max~\{$K_P\cdot A|$ $A$ is a rational curve whose numerical class $[A]$ is in $\Gamma$\}. If $\pi$ is an equidimensional fiber contraction of relative dimension $d$ and $l(\Gamma)=-d-1,$ then $\pi$ is called an elementary contraction of maximal length and $\pi$ is a projective bundle. \elemma
\bproof See Theorem 1.3, \cite{H}. \eproof

Now we assume that $\rho(P)=1$ i.e. $\dim C=0.$ We recall the following result about smooth varieties which admits two $\P^1$ bundle structures:
\blemma \label{classification}Let $X$ be a complex projective manifold with Picard number $\rho(X)=1$ and $\mathcal E$ rank $2$ vector bundle on $X.$ Assume that $Z=\P(\mathcal E)\> X$ admits another smooth morphism $Z\>Y$ of relative dimension $1$ and $n= \dim X\ge2.$ Then,
\bd
\item $X$ and $Y$ are Fano manifolds with $\rho=1$ and there exists a rank $2$ vector bundle $\mathcal E'$ on $Y$ such that $Z\>Y$ is given by $\P(\mathcal E').$
\item if $\mathcal E$ and $\mathcal E'$ are normalised by twisting with line bundles (i.e., $c_1=0$ or $-1$), then $((X,\mathcal E),(Y,\mathcal E'))$ is one of the following, up to exchanging the pairs $(X,\mathcal E)$ and $(Y,\mathcal E')$:
\begin{enumerate}
\item $((\P^2, T_{\P^2}),(\P^2, T_{\P^2})),$ where $T_{\P^2}$ is the tangent bundle of the projective plane $\P^2,$
\item $((\P^3, \mathscr {N}),(Q^3,\mathscr {S}  )),$ where $\mathscr{N}$ is a null-correlation bundle on $\P^3$ and $\mathscr{S}$ is the restriction to the $3-$dimensional quadric $Q^3$ of the universal quotient bundle of the Grassmannian $G(1,\P^3),$

\item $((Q^5,\mathscr{C}),(K(G_2),\mathscr{L})),$ where $\mathscr{C}$ is a Cayley bundle on $Q^5,$ $K(G_2)$ is the $5-$dimensional Fano homogeneous contact manifold of type $G_2$ which is a linear section of the Grassmannian $G(1,\P^6)$ and $\mathscr{L}$ the restriction of the universal quotient bundle on $G(1,\P^6).$
\end{enumerate}
\ed
\elemma
\bproof See Theorem 1.1, \cite{W}.\eproof
Now apply the above Lemma \ref{classification} to our problem, note that $E$ admits two $\P^1$ bundle structures and $\dim E=4,$ we can assume that $P=\P^3$ and the normal bundle $N_{P/X}=\mathscr {N}\otimes \mathcal O (d)$  for some integer $d\in \Z.$ By the exact sequence for the null-correlation bundle $$0\>\mathscr {N}\>T_{\P^3}(-1)\>\mathcal O(1)\>0$$  $E$ is a closed subvariety of $\P^3\times Q^3$ and $p$ is compatible with the projection $\P^3\times Q^3\>\P^3$ (resp. $p^+$ is compatible with the projection $\P^3\times Q^3\>Q^3,$ for details, see Proposition 2.6, \cite{S} and Chapter 1, Section 4.2, \cite{O} ).
\bexample \label{Example}Keep the notations as above, we can construct a simple type birational map as follows:\\

Let $P=\P^3$ and the normal bundle $N_{P/X}=\mathscr{N}\otimes \mathcal O(d).$  We denote  by $\varphi: \tilde{X}\>X$ the blowup of $X$ along $P,$ hence the exceptional divisor $E$ has a $\P^1$ bundle structure over $P.$ By Lemma \ref{classification}, $E$ has another $\P^1$ bundle structure over a quadric $Q^3$, we denote this projective bundle morphism by $p^+: E\> Q^3.$ Then $\omega_E\simeq \varphi^*\omega_X|_E\otimes \mathcal O_E(2E)\simeq p^*\omega_P\otimes(\det N_{P/X})^{-1}\otimes \mathcal O_E(2E).$ \\

Next we calculate the restriction of $\mathcal O_E(E)$ on an arbitrary fiber $F$ of $p^+.$  By  Proposition 2.6, \cite{S}, $F$ is a projective line in $\P^3$ and $c_1(\mathscr{N})=0.$ So $ p^*\omega_P\otimes(\det N_{P/X})^{-1}\otimes \mathcal O_E(2E)|_F=\mathcal O(-4-2d)\otimes \mathcal O_E(2E)|_F.$ Since $\omega_E|_F\simeq \omega_{E/Q^3}|_{F}=\mathcal O (-2).$ So $\mathcal O_E(E)|_F\simeq \mathcal O(d+1).$ By Fujiki-Nakano's criterion(see \cite{F} or Remark 11.10, \cite{DH}), there is a blow-down $\varphi^+: \tilde{X}\>X^+$ compatible with $p^+$ if and only if $d=-2.$\\

Also, $X^+$ constructed here is not necessarily projective. As in Remark \ref{W.C}, suppose that there exists a flopping contraction $\beta:X\>\bar{X}$ contracting  $P,$  then  $X^+$ can be proved  projective by the same argument as Proposition 1.3, \cite{W.C.L}. For the sake of completeness, we here give a detailed proof as follows:
First, $-K_{\tilde X}\cdot F=deg(\omega_E\otimes \mathcal O_E(-E))|_F=-1<0.$ So $F$ is a $K_{\tilde X}$ negative curve. Next, we aim to show that $[F]$ spans an extremal ray in $\overline{NE(\tilde X)},$ i.e., it has a supporting divisor(nef and big). Let $H$ be an ample line bundle on $X$ and $\mathcal L=\beta^*\bar{H},$ where $\bar{H}$ is an ample line bundle on $\bar{X}.$ Then we consider the divisor $$\mathcal L_k'=k \varphi^*\mathcal L-(\varphi^*H+\lambda E)$$ where $k>>0$ and  $\lambda=H\cdot \varphi(F).$ We next show that this divisor is nef and big for large $k$ and vanishes precisely on the ray spanned by $[F].$ For an irreducible curve $A$ in $\tilde X,$ we assume firstly that $A\subseteq E.$ Since $\rho(E)=2,$ $A$ is numerically equivalent to $a F+b  G$ where $G$ is a fiber of $p$  and $a,$ $b$ are positive numbers. Then for any positive $k,$ $\mathcal L_k'\cdot A\ge 0$ and  $\mathcal L_k'\cdot A= 0$ if and only if $b=0.$ Now, suppose that $A\nsubseteq E,$ then by the projection formula, $$\mathcal L_k'\cdot A=k \mathcal L\cdot \varphi(A)-H\cdot \varphi(A)-\lambda E\cdot A=k \bar{H}\cdot \beta(\varphi(A))-H\cdot \varphi(A)-\lambda E\cdot A.$$ The intersection number with $\bar{H},$  we denote it by $<\bar{H}, \cdot
>,$ defines a linear form on $N_1(\bar{X})_{\mathbb R}.$ Since $\bar{H}$ is ample, there is a positive lower bound of $<\bar{H}, \cdot
>$ for any compact subset of $\overline{NE(\bar X)}\setminus\{0\}.$ So for large enough $k,$ $\mathcal L_k'\cdot A>0.$ In conclusion, for $k>>0,$ $\mathcal L_k'$ is nef and big and vanishes precisely on the ray spanned by $[F].$ Then $F$ is a $K_{\tilde{X}}$ negative extremal curve, so $X^+$ is projective.\eexample We'd like to thank Will Donovan for pointing out that our Example \ref{Example} is already known by Roland Aburf, see \cite{Ed Segal}.

In conclusion, we have proved the following theorem.
\begin{theorem} A birational map of simple type, if $\dim X\le 5,$
is either a  standard flop, a twisted Mukai flop or a flop as in Example \ref{Example}.\end{theorem}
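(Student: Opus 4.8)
The plan is to assemble the final theorem from the case analysis already carried out in the previous sections, organized by the dimension of $X$ and then by the dimension of the blown-up center $P$. First I would invoke Lemma \ref{lower bound}, which gives $2\dim P+1\ge\dim X$, to cut down the possible values of $\dim P$ for each $\dim X\in\{3,4,5\}$. The cases $\dim X=3$ and $\dim X=4$, together with the subcase $\dim X=5,\ \dim P=2$, are disposed of entirely by Corollary \ref{Corollary} (which itself rests on Theorem \ref{2} and hence on Lazarsfeld's theorem and the classification of varieties with two projective bundle structures of small dimension): in these cases $\theta$ is a standard flop or a Mukai flop. So the only remaining case is $\dim X=5$ with $\dim P=3$, where $E$ carries two $\P^1$-bundle structures over threefolds $P$ and $P^+$.

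For this last case I would subdivide according to the Picard number $\rho(P)$, equivalently the dimension of the common base $C$ produced by the extremality result (the lemma quoting Theorem 2.2 of \cite{K}, which also gives that $\pi:P\to C$ is smooth). If $\dim C=1$ or $\dim C=2$, the dedicated lemma in Section 3 shows $\theta$ is a twisted Mukai flop (when $\dim C=1$) or a standard flop (when $\dim C=2$) over $C$; the engine there is the smooth-fibration-to-projective-bundle criterion on curves together with Theorem \ref{main result}, plus the maximal-length criterion Lemma \ref{maximal length} in the surface-base case. If instead $\rho(P)=1$, i.e. $\dim C=0$, then Lemma \ref{classification} (Theorem 1.1 of \cite{W}) lists the possible pairs $((P,\mathcal E),(P^+,\mathcal E'))$; since $\dim E=4$ we are in the case $P=\P^3$ with $N_{P/X}=\mathscr N\otimes\mathcal O(d)$ for a null-correlation bundle $\mathscr N$. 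The Fujiki–Nakano blow-down computation carried out in Example \ref{Example} shows the blow-down $\varphi^+$ exists precisely when $d=-2$, in which case $\theta$ is exactly the flop described in Example \ref{Example}; for other values of $d$ no birational map of simple type arises. Collecting the three families — standard flop, twisted Mukai flop, Example \ref{Example} — over all $\dim X\le5$ yields the statement.

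The only genuinely delicate point is the $\rho(P)=1$ subcase: one must rule out the other two entries of Lemma \ref{classification} (the pair on $\P^2$ and the pair on $Q^5$ / $K(G_2)$) by the dimension constraint $\dim E=4$, and then carry through the canonical-bundle bookkeeping on a fiber $F$ of $p^+$ to pin down that $d=-2$ is forced by Fujiki–Nakano. Everything else is a matter of quoting the already-established lemmas and tracking which flop type each case produces, so I would keep the write-up short and mostly a matter of citation and case division rather than fresh computation.
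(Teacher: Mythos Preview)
Your proposal is correct and follows essentially the same route as the paper: the paper also assembles the theorem by first invoking Corollary \ref{Corollary} for the low-dimensional cases and the $\dim X=5,\ \dim P=2$ subcase, then in the remaining case $\dim X=5,\ \dim P=3$ splits according to $\dim C$ (using the extremality lemma from \cite{K}, the curve/surface lemmas with the maximal-length criterion, and Theorem \ref{main result}), and finally for $\rho(P)=1$ applies Lemma \ref{classification} together with the dimension constraint $\dim E=4$ to single out the $(\P^3,\mathscr N)$ case and the Fujiki--Nakano computation of Example \ref{Example} to force $d=-2$. Your identification of the only delicate step---ruling out the $\P^2$ and $Q^5/K(G_2)$ entries by dimension and pinning down $d$---is exactly what the paper does.
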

\end{document}